\numberwithin{equation}{section}
\def\P{{\mathbb P}}
\def\Z{{\mathbb Z}}
\newcommand \rrlap[1]{\hbox to 0pt{#1}}
\theoremstyle{theorem} 
\newtheorem{Thm}{Theorem}[section]
\newtheorem{Prop}[Thm]{Proposition}
\newtheorem{Coro}[Thm]{Corollary}
\newtheorem{Lem}[Thm]{Lemma}
\theoremstyle{definition}
\newtheorem{Ex}[Thm]{Example}
\newtheorem{Remk}[Thm]{Remark}
\numberwithin{equation}{section}
\begin{document}

\title[Classification of secant defective manifolds]{Classification of secant defective manifolds near the extremal case}
\author[K.\ Han]{Kangjin Han}
\address{Algebraic Structure and its Applications Research Center (ASARC), Department of Mathematics, Korea Advanced Institute of Science and Technology,
373-1 Gusung-dong, Yusung-Gu, Daejeon, Korea}
\email{han.kangjin@kaist.ac.kr}

\thanks{This work was supported by the National Research Foundation of Korea (NRF) grant funded by the Korea
government (MEST) (No. 2011-0001182)}
\date{\today}
\keywords{Secant defective, local quadratic entry locus, conic-connected, Terracini lemma, tangential projection, second fundamental form, Scorza lemma.}
\subjclass[2]{ 14MXX, 14NXX, 14M22}

\begin{abstract}
Let $X\subset \P^N$ be a nondegenerate irreducible closed subvariety of dimension $n$ over the field of complex numbers and let $SX\subset\P^N$ be its secant variety. $X\subset\P^N$ is called `secant defective' if $\dim(SX)$ is strictly less than the expected dimension $2n+1$. In \cite{Z1}, F.L. Zak showed that for a secant defective manifold necessarily $N\le{n+2 \choose n}-1$ and that the Veronese variety $v_2(\P^n)$ is the only boundary case. Recently R. Mu$\tilde{\textrm{n}}$oz, J. C. Sierra, and L. E. Sol\'a Conde classified secant defective varieties next to this extremal case in \cite{MSS}.

In this paper, we will consider secant defective manifolds $X\subset\P^N$ of dimension $n$ with $N={n+2 \choose n}-1-\epsilon$ for $\epsilon\ge0$. First, we will prove that $X$ is a $LQEL$-manifold of type $\delta=1$ for $\epsilon\le n-2$ (see Theorem \ref{main_thm}) by showing that the tangential behavior of $X$ is good enough to apply Scorza lemma. Then we will completely describe the above manifolds by using the classification of conic-connected manifolds given in \cite{IR1}. Our method generalizes previous results in \cite{Z1,MSS}.
\end{abstract}

\maketitle
\setcounter{page}{1}

\section*{Introduction}\label{section_0}

Let $X\subset \P^N$ be a nondegenerate irreducible closed subvariety of dimension $n$ over the field of complex numbers. If $X$ is also smooth, we call it a \textit{manifold}. Let $SX\subset\P^N$ be the \textit{secant variety} of $X$, i.e. the closure of the union of all lines which pass through two or more points of $X$. 

Clearly, we know that $\dim(SX)$ is less than or equal to $\min\{N,2n+1\}$. If $\dim(SX)<2n+1$, then $X\subset\P^N$ is said to be \textit{secant defective} and we call $\delta(X):=2n+1-\dim(SX)>0$ the \textit{secant defect} of $X$. 

It is well-known that there are no secant defective curves, if $N\ge3$. For surfaces, in \cite{Sev} F. Severi proved that the second Veronese embedding $v_2(\P^2)$ in $\P^5$ and cones are the only secant defective surfaces in $\P^N, N\ge5$. Later G. Scorza classified all secant defective irreducible varieties of dimension $3$ in \cite{Sco}, a result rediscovered by T. Fujita in the smooth case in \cite{F} (see also \cite{CC1} for a modern revisitation of Scorza's original paper \cite{Sco}). In dimension $4$, some partial results have been known (e.g. see \cite{Sco1} and \cite{FR}) but a complete classification in arbitrary dimension seems to be out of reach till now.

One could expect that \textit{the secant variety $SX$ becomes larger as $N$ increases}. In \cite{Z1} Zak proved an upper bound for the embedding dimension of secant defective manifolds:

\begin{Thm}[Zak]
Let $X\subset \P^N$ be a nondegenerate manifold of dimension $n\ge2$ with $\dim(SX)\le2n$ and let $M(n):={n+2 \choose 2}-1$. Then, $N\le M(n)$ with equality holding if and only if $X$ is the second Veronese embedding $v_2(\P^n)\subset\P^{M(n)}$.
\end{Thm}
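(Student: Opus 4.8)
The plan is to convert the defectivity hypothesis into information about the \emph{second fundamental form} at a general point, and then read off both the inequality and the extremal case from it. First I would fix a general point $x\in X$ and apply Terracini's lemma: for general $y\in X$ and general $z\in\langle x,y\rangle$ one has $T_z(SX)=\langle T_xX,T_yX\rangle$, whence
\[ \dim(SX)=2n-\dim(T_xX\cap T_yX). \]
The hypothesis $\dim(SX)\le 2n$ therefore reads $\dim(T_xX\cap T_yX)=\delta-1\ge0$; in particular the embedded tangent spaces at a general pair of points always meet. Next I would bring in the projective second fundamental form $|\mathrm{II}_{X,x}|$, a linear system of quadrics on $\P(T_xX)\cong\P^{n-1}$ whose linear span governs the second osculating space $T^{(2)}_xX$, so that $\dim\langle\mathrm{II}_{X,x}\rangle=\dim T^{(2)}_xX-n$. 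Since these quadrics lie in the space of all quadratic forms in $n$ variables, $\dim\langle\mathrm{II}_{X,x}\rangle\le\binom{n+1}{2}$, and hence $\dim T^{(2)}_xX\le n+\binom{n+1}{2}=M(n)$.

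The heart of the matter, and the step I expect to be the main obstacle, is to show that defectivity forces the second osculating space to fill the ambient space, i.e. $T^{(2)}_xX=\P^N$ for general $x$. Granting this, $N=\dim T^{(2)}_xX=n+\dim\langle\mathrm{II}_{X,x}\rangle\le M(n)$ and the bound follows. The difficulty is genuine: a merely nondegenerate $X$ can span $\P^N$ with the help of higher-order osculation (rational normal curves being the extreme case), so a priori $N-n$ might exceed $\dim\langle\mathrm{II}_{X,x}\rangle$. The role of secant defectivity is to rigidify the tangential geometry. Concretely I would study the tangential projection $\tau_x\colon X\dashrightarrow\P^{N-n-1}$ from $T_xX$, whose general fibre has dimension $\delta$ and whose image has dimension $n-\delta$, and note that to leading order $\tau_x$ is the map attached to $|\mathrm{II}_{X,x}|$; combining this with the Terracini identity above and with Zak's theorem on tangencies applied to $L=T_z(SX)$ — which is tangent to $X$ along the whole entry locus of $z$ — one controls the entry loci and excludes any purely higher-order normal directions, giving $T^{(2)}_xX=\P^N$. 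This is exactly where defectivity, rather than mere nondegeneracy, enters.

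Finally I would treat the equality case. If $N=M(n)$ then necessarily $\dim\langle\mathrm{II}_{X,x}\rangle=\binom{n+1}{2}$ for general $x$, i.e. $|\mathrm{II}_{X,x}|$ is the \emph{complete} system of quadrics on $\P^{n-1}$; the associated map is then the Veronese $v_2(\P^{n-1})$, whose image has dimension $n-1$, forcing $\delta=1$. To conclude $X=v_2(\P^n)$ I would invoke a rigidity/reconstruction statement of Fubini--Scorza type: a smooth variety whose projective second fundamental form at a general point is the complete system of quadrics is projectively equivalent to an open subset of $v_2(\P^n)$, and by smoothness and degree $X$ is $v_2(\P^n)$ itself. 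Conversely $v_2(\P^n)\subseteq\P^{M(n)}$ is nondegenerate with $\dim(S\,v_2(\P^n))=2n$ (the locus of quadrics of rank $\le2$), so it attains the bound, which completes the characterization.
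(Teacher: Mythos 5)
First, a point of orientation: the paper does not prove this statement at all --- it is quoted as Zak's theorem from \cite{Z1} and used as a black box --- so there is no in-paper argument to measure you against. Judged on its own terms, your framework for the inequality is the right one and is the same one the paper deploys for its new results: $N\le M(n)$ follows the moment one knows that for a secant defective manifold the second fundamental form at a general point is as large as possible, $\dim|II_{x,X}|=N-n-1$ (equivalently $T^{(2)}_xX=\P^N$), since then $N-n-1\le\dim|\mathcal O_{\P^{n-1}}(2)|={n+1 \choose 2}-1$, i.e.\ $N\le M(n)$. That completeness statement is precisely Proposition \ref{IR} of the paper (Ionescu--Russo \cite{IR2}, Prop.\ 1.2). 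Your sketch of it names the right ingredients (tangential projection, Terracini, the theorem on tangencies) but omits the actual mechanism: one shows that the closure of the general $\delta$-dimensional fibre of $\pi_x$ passes through $x$, because it contains the component through $y$ of the entry locus $\Sigma_p$, $p\in\langle x,y\rangle$ general, which connects $y$ to $x$; hence every general fibre meets the exceptional divisor $E$ of $Bl_xX$, so $\tilde{\pi}_x(E)=W_x$ spans $\P^{N-n-1}$ by nondegeneracy of $X$. This step is fillable (or citable), so I would not call it fatal.

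The genuine gap is the equality case. You correctly deduce that $N=M(n)$ forces $|II_{x,X}|=|\mathcal O_{\P^{n-1}}(2)|$ and $\delta=1$, but the entire identification $X\simeq v_2(\P^n)$ is then delegated to an unproved ``Fubini--Scorza rigidity'' principle. Second-order rigidity of the Veronese is itself a substantial theorem, not a quotable commonplace at the generality you need, and ``by smoothness and degree'' is not an argument --- at that stage you know neither the degree of $X$ nor, yet, that $X$ is covered by conics. The classical route (Zak's) is different and is what you would have to reconstruct: with $\delta=1$ and $W_x=v_2(\P^{n-1})$ having birational Gauss map, Scorza's lemma (Theorem \ref{scorza} of the paper) produces a $2(n-1)$-dimensional family of conics on $X$ with a unique smooth member through two general points, and from this family one builds a birational map $\P^n\dashrightarrow X$ under which the hyperplane system pulls back to the complete system of quadrics, whence $X=v_2(\P^n)$. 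Without an argument of this kind the characterization of the boundary case is asserted rather than proved. (The converse direction, that $\dim S(v_2(\P^n))=2n$ via rank-two quadrics, is fine.)
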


Therefore, for secant defective manifolds $X\subset \P^N$ it is meaningful to investigate manifolds whose embedding dimension is near this upper bound for $N$, i.e. $M(n)$. Let $B^n\subset\P^{M(n)-1}$ be the projection of $v_2(\P^n)\subset\P^{M(n)}$ from a point on it. Using tangential projections, R. Mu\~noz, J. C. Sierra, and L. E. Sol\'a Conde gave a first anwser in \cite{MSS} as follows:

\begin{Thm}[Mu\~noz, Sierra, Sol\'a Conde]\label{MSC}
Let $X\subset \P^N$ be a nondegenerate secant defective manifold of dimension $n$ and let $N\ge\max\{M(n)-1,2n+1\}$. Then one of the following conditions holds:
\begin{itemize}
\item[(a)] $n\ge2$ and $X$ is the second Veronese embedding $v_2(\P^n)\subset\P^{M(n)}$;
\item[(b)] either $n\ge3$ and $X$ is the isomorphic projection of $v_2(\P^n)\subset\P^{M(n)-1}$, or $n\ge3$ and $X$ is $B^n\subset\P^{M(n)-1}$.
\end{itemize}
\end{Thm}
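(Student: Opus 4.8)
The plan is to push the extremal-case technology of Zak's Theorem one step down, by combining the general tangential projection with the second fundamental form and then appealing to the Scorza lemma and to the Ionescu--Russo classification of conic-connected manifolds \cite{IR1}.

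First I would reduce the range of $N$. Since $X$ is secant defective, $\dim SX\le 2n$, so Zak's Theorem gives $N\le M(n)$; together with $N\ge M(n)-1$ this forces $N=M(n)$ or $N=M(n)-1$. If $N=M(n)$, the equality clause of Zak's Theorem immediately yields $X=v_2(\P^n)$, which is case (a) (and $n\ge2$, since $v_2(\P^1)$ is a conic, hence not secant defective). In the remaining case $N=M(n)-1$ the standing hypothesis $N\ge 2n+1$ reads $\tfrac{n(n+3)}{2}-1\ge 2n+1$, i.e. $n\ge3$, which is precisely the range of (b). So from here on I assume $N=M(n)-1$ and $n\ge3$, and the goal becomes to identify $X$ with an isomorphic projection of $v_2(\P^n)$ or with $B^n$.

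For this I would fix a general point $x\in X$ and study the tangential projection $\tau_x\colon X\dashrightarrow X_x\subset\P^{N-n-1}$ from the embedded tangent space $\P(T_xX)$. By Terracini's lemma $\dim\langle T_xX,T_yX\rangle=\dim SX=2n+1-\delta$ for a general second point $y$, so $\tau_x$ sends $T_yX$ onto the tangent space of $X_x$ at $\tau_x(y)$, whence $\dim X_x=n-\delta$; moreover $X_x$ is nondegenerate in $\P^{N-n-1}=\P^{M(n-1)-1}$ because $X$ is. The crux is then to show $\delta=1$. Writing $X$ near $x$ as a graph over $T_xX$ with quadratic part the second fundamental form $II_x$, the projection $\tau_x$ is governed to leading order by the linear system $|II_x|\subseteq|\mathcal{O}_{\P^{n-1}}(2)|$; near-maximality of $N$ should force this system to be the complete system of quadrics minus at most one condition, so that $X_x$ is (dominated by) a projection of $v_2(\P^{n-1})$ from at most a point. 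This is exactly the ``good tangential behavior'' needed to apply the Scorza lemma, which I would invoke to conclude that the general entry locus of $X$ is a quadric, i.e. that $X$ is an LQEL-manifold of type $\delta$. Granting this, $X_x$ is a smooth nondegenerate manifold; if $\delta\ge2$ it is again secant defective, and Zak's Theorem applied to $X_x\subset\P^{M(n-1)-1}$ gives $M(n-1)-1\le M(n-\delta)\le M(n-2)=M(n-1)-n$, an absurdity for $n\ge3$. Hence $\delta=1$, the entry loci are smooth conics, and $X$ is conic-connected.

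Finally I would run $X$ through the Ionescu--Russo classification of conic-connected manifolds \cite{IR1} and keep those of dimension $n$ that are secant defective with $N=M(n)-1$. A numerical comparison with the explicit list shows that only the Veronese family attains an embedding dimension this close to $\binom{n+2}{2}$ for a manifold of dimension $n$ (the Segre and Grassmannian entries sit in far smaller projective spaces relative to their dimension), leaving exactly the isomorphic projection of $v_2(\P^n)$ into $\P^{M(n)-1}$ and the inner projection $B^n$; this is case (b). I expect the main obstacle to be the middle step: rigorously certifying the tangential behavior---concretely, that $X$ fills its second osculating space at a general point so that $|II_x|$ has the asserted near-maximal dimension, and hence that the hypotheses of the Scorza lemma are met. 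Once LQEL of type $1$ is in hand, the remaining work (the Zak bound on $X_x$, the passage to conic-connectedness, and the sieving of the Ionescu--Russo list, including the separation of the outer projection from $B^n$) is bookkeeping rather than a new idea.
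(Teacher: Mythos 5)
Your overall strategy --- reduce to $N=M(n)-1$, study the general tangential projection through the second fundamental form, apply Scorza's lemma to get LQEL of type $1$, then sieve the Ionescu--Russo classification --- is precisely the method this paper uses to recover and extend Theorem \ref{MSC} (via Theorem \ref{main_thm} and Corollary \ref{class_almost}; note the paper itself only cites \cite{MSS} for this statement, whose original proof predates \cite{IR1}). Your opening reductions and the computation that $|II_{x,X}|$ has codimension $\epsilon\le 1$ in $|\mathcal O_{\P^{n-1}}(2)|$ are correct. However, two of your steps have genuine gaps. Your derivation of $\delta=1$ passes through the assertion that if $\delta\ge2$ then $W_x$ is ``again secant defective,'' which is unjustified: $W_x$ has dimension $n-\delta$ inside $\P^{M(n-1)-1}$, and nondegeneracy in a large ambient space does not make a variety secant defective, so Zak's bound cannot be invoked. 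The fix is simpler and is what the paper does: $W_x$ is the image of $v_2(\P^{n-1})$ under projection from $\Lambda\simeq\P^{\epsilon-1}$ with $\epsilon\le1$; the Veronese is not a cone, so this map is generically finite, $\dim W_x=n-1$, and $\delta=1$ outright. Relatedly, Scorza's lemma needs the Gauss map $\mathcal G_{W_x}$ to be birational, which you flag as ``the main obstacle'' but do not resolve. The paper resolves it by a tangent-space estimate: if $\mathcal G_{W_x}$ were not generically finite there would be a line $L\subset W_x$ with constant tangent space, and lifting two of its general points to $Y=v_2(\P^{n-1})$ gives $2n-2=\dim\langle T_{y_1}Y,T_{y_2}Y\rangle\le\dim\langle\Lambda,T_{f(y_i)}W_x\rangle\le(n-\delta)+\epsilon$, forcing $\epsilon\ge n-1$, absurd for $\epsilon\le1$ and $n\ge3$.

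The more serious gap is the last step. Theorem \ref{CC} is not a closed list that can be sieved numerically: its first alternative is ``$X$ is a prime Fano manifold of index $i(X)\ge\frac{n+1}{2}$,'' which carries no explicit embedding data, so ``a numerical comparison with the explicit list'' does not touch it (and there are no Grassmannian entries in that list --- those hide inside the prime Fano alternative). Excluding the prime Fano case is where the paper does real work: such an $X$ is covered by lines (Remark \ref{cov_line}), the inclusion $\mathcal L_x\subseteq Bs(|II_{x,X}|)$ forces $v_2(\mathcal L_x)\subseteq\Lambda=\P^{\epsilon-1}$, Russo's theorem (\cite{Ru}, Theorem 2.3) gives $\dim\mathcal L_x=\frac{n-3}{2}\ge0$, and Araujo's theorem \cite{Ar} rules out $\mathcal L_x$ being a linear subspace of $E$; together these yield $\binom{\frac{n-3}{2}+2}{2}-1<\dim\langle v_2(\mathcal L_x)\rangle\le\epsilon-1\le0$, a contradiction. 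Without some version of this argument your final step does not close. The remaining sieving of cases (a)--(d) is indeed routine bookkeeping, as you say.
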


Here we will consider the secant defective manifolds $X\subset\P^N$ of dimension $n$ with $N=M(n)-\epsilon$ for some $\epsilon\ge0$. We will show first that $X$ is \textit{LQEL}-mainfold for relatively small $\epsilon\ge 0$ (see Theorem \ref{main_thm}) by investigating a general tangential projection via the second fundamental form and deducing that the general tangential behavior of $X$ is good enough to apply Scorza lemma. Next, we give a classification of secant defective manifolds for the cases $\epsilon\le n-2$ (see Corollary \ref{class_almost}) using the classification of conic-connected manifolds in \cite{IR1}, due to P. Ionescu and F. Russo.
\bigskip


\section{Preliminaries}\label{section_1}

We work over the field of complex numbers. Let $X\subset \P^N$ be a nondegenerate irreducible closed submanifold of dimension $n$. 

Let $x,y\in X$ be two general points and $p$ be a general point on the line $\langle x,y\rangle$. We call the closure of the locus of couples of distinct points of $X$ spanning secant lines passing through $p$ \textit{the entry locus of $X$ with respect to $p\in SX$} and denote it by $\Sigma_p(X)$. For a general $p\in SX$, it is easy to see that $\Sigma_p(X)$ is equidimensional of dimension $\delta(X)$. We would like to recall that for two given general points $x,y\in X$ there exists an $r$-dimensional quadric hypersurface $Q_{x,y}^r\subseteq X$ for some $r\ge0$ which passes through $x,y$. Clearly, $Q_{x,y}^r\subseteq\Sigma_p(X)$ when $p\in\langle x,y\rangle$ and $r\le \delta$. Manifolds for which $r\ge1$ are called \textit{conic-connected manifold} (abbr. CCM) and at the other extreme case (i.e. $r=\delta$) they are called \textit{manifold with local quadratic entry locus} (abbr. LQELM). Moreover, when $Q_{x,y}^r=\Sigma_p(X)$ we call $X$ \textit{manifold with quadratic entry locus} (abbr. QELM); see \cite{Ru1, IR1}.

There is also a classical tool, called \textit{Terracini lemma}, which is essential to understand secant varieties (see the classical reference \cite{Ter} and the most recent \cite{Ad}).

\begin{Thm}[Terracini lemma]\label{terracini}
Let $X\subset\P^N_{K}$ be an irreducible subvariety. Then,
\begin{itemize}
\item[(1)] for every $x,y\in X$ $(x\neq y)$, and for every $p\in\langle x,y\rangle$,
$$\langle T_xX,T_yX\rangle \subseteq T_p SX ~;$$
\item[(2)] if $char(K)=0$, there exists an open subset $U$ of $SX$ such that
$$\langle T_xX,T_yX\rangle = T_p SX $$
for every $p\in U$ and $x,y\in X ~(x\neq y)$ such that $p\in \langle x,y\rangle$.
\end{itemize}
\end{Thm}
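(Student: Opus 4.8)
The plan is to treat the two parts separately, deriving the inclusion in (1) from a direct differentiation valid in every characteristic, and then upgrading it to the equality in (2) by invoking generic smoothness, where the hypothesis $\mathrm{char}(K)=0$ becomes indispensable.

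For (1), I would parametrize the secant variety by the rational map
\[
 f\colon X\times X\times \P^1 \dashrightarrow \P^N,\qquad (x,y,[s:t])\longmapsto [\,s\hat x+t\hat y\,],
\]
where $\hat x,\hat y$ denote affine representatives in the cone over $\P^N$; this is a morphism on the open locus where $x\neq y$ and $s\hat x+t\hat y\neq0$, and its image is dense in $SX$. Fixing $p=[s\hat x+t\hat y]$ with $s,t\neq0$, I would compute $df$ on arcs: replacing $x,y$ by arcs $x(\tau),y(\tau)$ in $X$ and $s,t$ by functions $s(\tau),t(\tau)$, the Leibniz rule gives
\[
 \tfrac{d}{d\tau}\big(s\hat x+t\hat y\big)\big|_{0}=s'\hat x+s\,x'(0)+t'\hat y+t\,y'(0),
\]
so that $\im(df)$ is spanned by $\widehat{T_xX}$ and $\widehat{T_yX}$, i.e. equals $\langle T_xX,T_yX\rangle$ projectively (using that $\hat x\in\widehat{T_xX}$, $\hat y\in\widehat{T_yX}$ and $s,t\neq0$). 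Since the image of a differential lands in the tangent space of the image, this yields $\langle T_xX,T_yX\rangle\subseteq T_pSX$ for every interior $p$, and the computation uses only the product rule, hence holds over any field. The endpoint cases $p=x$ or $p=y$ I would recover by semicontinuity: letting $p$ tend to $x$ along the line, the fixed space $\langle T_xX,T_yX\rangle$ sits inside the limit of the $T_pSX$, which is contained in $T_xSX$.

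For (2), I would let $U\subseteq SX$ be the smooth locus intersected with the image of the smooth locus of $f$. In characteristic zero, generic smoothness guarantees that $f$ is smooth along the \emph{entire} fiber over a general point $p$; hence $df$ is surjective onto $T_pSX$ at every triple $(x,y,p)$ lying over such $p$. Combined with the identification $\im(df)=\langle T_xX,T_yX\rangle$ from the first part, this forces $\langle T_xX,T_yX\rangle=T_pSX$ for every $x,y$ with $p\in\langle x,y\rangle$, which is exactly the asserted equality on $U$.

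The main obstacle is precisely this passage from a single general triple to \emph{all} representations of a general point: it is not enough that $f$ be dominant and generically smooth onto its image, one must use the stronger characteristic-zero statement that $f$ is smooth along the whole general fiber, so that surjectivity of $df$ holds simultaneously for every $(x,y)$ over $p$. This is the step that fails in positive characteristic, where anomalous secant behaviour genuinely occurs; identifying $\im(df)$ cleanly with $\langle T_xX,T_yX\rangle$ and controlling it along the full fiber is the technical heart of the argument.
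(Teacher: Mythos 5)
The paper does not actually prove this statement: Terracini's lemma is quoted as a classical tool, with references to \cite{Ter} and \cite{Ad}, so there is no in-house argument to compare yours against. Your proposal is the standard proof (parametrize $SX$ by the abstract join $X\times X\times\P^1$, identify the image of the differential of $f$ with the affine cone over $\langle T_xX,T_yX\rangle$ using $s,t\neq0$, and invoke generic smoothness along the \emph{whole} fiber over a general $p$ for part (2)); in outline it is correct, and you rightly isolate smoothness of $f$ along the entire general fiber as the point where characteristic zero enters. Two details need care if the lemma is to be proved in the stated generality (an arbitrary irreducible subvariety and \emph{every} $x,y\in X$). First, in part (1) the computation via arcs $x(\tau),y(\tau)$ only produces tangent vectors lying in the tangent cone of $X$ at $x$; at a singular point (e.g.\ a cusp) these span strictly less than the Zariski tangent space $T_xX$, so to get the full inclusion you should compute the differential of $f$ directly as a linear map on Zariski tangent spaces — the same Leibniz formula holds and the argument is otherwise unchanged. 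Second, generic smoothness requires a smooth source, hence applies to $X_{\mathrm{sm}}\times X_{\mathrm{sm}}\times\P^1$; the asserted equality in (2) for pairs $(x,y)$ meeting $\mathrm{Sing}(X)$ over $p\in U$ therefore needs an additional limiting/semicontinuity step (approximate such a pair by pairs of smooth points and use that tangent spaces can only jump up in the limit). Neither caveat affects the way the lemma is used in this paper, where $X$ is smooth and $x,y$ are general.
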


Tangential projections have been basic tools in the study of secant defective manifolds and of the invariant $\delta(X)$. The \textit{tangential projection} of $X\subset\P^N$, say $\pi_x$, is the projection of $X$ from a general (projective) tangent space to $X$ at a general point $x\in X$, indicated by $T_xX$. Let $W_x\subset\P^{N-n-1}$ be the Zariski closure of the image of $\pi_x$ and let $\delta=\delta(X)$. Then, we have

\begin{equation}\label{tg_pr}
  \pi_x : X\dashrightarrow W_x \subset \P^{N-n-1}~,
\end{equation}
where $\dim(W_x)=n-\delta$ by Terracini Lemma (Theorem \ref{terracini}). In particular the general fiber of $\pi_x$ is  purely $\delta$-dimensional. 

When $X\subset\P^N$ is secant-defective and $SX\subsetneq\P^N$, we can describe the image of the general tangential projection, $W_x\subset \P^{N-n-1}$, via the second fundamental form of $X$ at $x$. Let $\phi: Bl_xX\to X$ be the blow-up of $X$ at $x$, $E=\P((T_xX)^{\ast})=\P^{n-1}\subset Bl_xX$ be the exceptional divisor, and $H$ be a hyperplane section of $X\subset\P^N$. Consider the rational map $\tilde{\pi}_x : Bl_xX\dashrightarrow \P^{N-n-1}$ and its restriction to $E$ which is given by the linear system $|\phi^{\ast}(H)-2E|_{|E}\subseteq |-2E_{|E}|=|\mathcal O_{\P^{n-1}}(2)|$. We call this linear system the \textit{second fundamental form of $X\subset\P^N$ at $x$} and denote it by $|II_{x,X}|$. Obviously, $\dim(|II_{x,X}|)\le N-n-1$ and $\tilde{\pi}_x(E)\subseteq W_x\subseteq \P^{N-n-1}$. 

Let $\mathcal{L}_x$ be the Hilbert scheme of lines contained in $X$ and passing through $x\in X$. The scheme $\mathcal L_x$ can be naturally identified with a subscheme of $E\simeq\P^{n-1}$ parametrizing the space of tangent diriections to $X$ at $x$. Then
\begin{equation}\label{tg_pr}
  \mathcal{L}_x \subseteq Bs(|II_{x,X}|)~(\textrm{the base locus of $|II_{x,X}|$})~,
\end{equation}
since the base locus of $|II_{x,X}|$ consists of directions associated to lines having a contact at least three with $X$ at $x$. 

We recall here a key result which allows us to see the \textit{whole} general tangential projection $W_x$ via the second fundamental form $|II_{x,X}|$ (see \cite{IR2} Proposition 1.2).

\begin{Prop}\label{IR}
Let $X$ be a nondegenerate secant defective manifold and let $x$ be a general point of $X$. Then $\dim(|II_{x,X}|)=N-n-1$ and $|II_{x,X}|$ is surjective (i.e. $\tilde{\pi}_x(E)=W_x$).
\end{Prop}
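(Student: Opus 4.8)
The plan is to reduce the entire statement to the single geometric assertion that $\tilde{\pi}_x(E)=W_x$, and then to extract both conclusions from it. First I would record that the tangential projection $\pi_x$ is defined by the complete linear system $V=|\phi^{\ast}(H)-2E|$ of hyperplane sections singular at $x$ (hyperplanes of $\P^N$ containing $T_xX$), so that $\dim(V)=N-n-1$, the dimension being preserved on $X$ because $X$ is nondegenerate; by construction $|II_{x,X}|=V_{|E}$ and $\tilde{\pi}_x(E)\subseteq W_x$. I would also note that $W_x$ is nondegenerate in $\P^{N-n-1}$, since the span of $\pi_x(X)$ is the image of $\langle X\rangle=\P^N$ under projection from $T_xX$, which is all of $\P^{N-n-1}$. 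Granting $\tilde{\pi}_x(E)=W_x$, the map $\tilde{\pi}_x|_E\colon E\dashrightarrow\P^{N-n-1}$ is then given by the subsystem $|II_{x,X}|\subseteq|\mathcal O_{\P^{n-1}}(2)|$ with nondegenerate image; a linear system whose associated map has nondegenerate image in $\P^{N-n-1}$ must have dimension exactly $N-n-1$, and so $\dim(|II_{x,X}|)=N-n-1$ follows simultaneously with surjectivity. Thus both assertions reduce to the equality of images.

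The heart of the argument is therefore the reverse inclusion $W_x\subseteq\tilde{\pi}_x(E)$. I would take a general point $y\in X$, so that $w:=\pi_x(y)$ is a general point of $W_x$, and rely on the fact, available for secant defective manifolds, that two general points are joined by a conic $C\subseteq X$ (i.e. the quadric $Q_{x,y}^r$ has $r\ge1$). The crucial observation is that such a conic is contracted to a point by $\pi_x$: since $C\subseteq X$, its tangent line at $x$ lies in $T_xX$, so the plane $\langle C\rangle$ meets the center $T_xX$ along a line, and projecting $\langle C\rangle\cong\P^2$ from a line contained in it collapses $C$ to a single point, necessarily $w=\pi_x(y)$. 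On $\Bl_xX$ the strict transform $\tilde{C}$ meets $E$ at the point $v\in E$ corresponding to the tangent direction $[T_xC]$, and since $\tilde{\pi}_x$ agrees with $\pi_x\circ\phi$ off $E$ and is continuous, $\tilde{\pi}_x(v)=w$. As $v\in E$, this gives $w\in\tilde{\pi}_x(E)$, and letting $y$ vary over a general point proves $W_x\subseteq\tilde{\pi}_x(E)$.

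The main obstacle I anticipate is of a technical, generic-transversality nature rather than a conceptual one: I must ensure that for general $y$ the connecting conic meets $E$ at a point $v$ lying off the base locus $Bs(|II_{x,X}|)$, so that $\tilde{\pi}_x(v)$ is genuinely defined and equals $w$. Here the inclusion $\mathcal L_x\subseteq Bs(|II_{x,X}|)$ must be handled with care, since it cuts the other way: the tangent directions arising from the entry-locus conics are not those of lines through $x$ contained in $X$, and for general $y$ the direction $v$ is a general point of the image sweeping out $W_x$. I would make this precise by working with the universal family of entry loci over a neighbourhood of the general pair $(x,y)$, showing that the induced map to $E$ dominates a subvariety whose $\tilde{\pi}_x$-image is all of $W_x$, and invoking that the general fiber of $\pi_x$ is purely $\delta$-dimensional so that these contracted conics genuinely fill out the fibers. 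Once the generic $v$ is seen to be a point of definition of $\tilde{\pi}_x$, the equality $\tilde{\pi}_x(E)=W_x$ follows, and by the reduction above so does the proposition.
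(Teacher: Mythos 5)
Your first paragraph is sound: once $\tilde{\pi}_x(E)=W_x$ is known, the nondegeneracy of $W_x\subset\P^{N-n-1}$ (which follows from that of $X\subset\P^N$) forces $\dim(|II_{x,X}|)=N-n-1$, since the image of the map defined by the subsystem $|II_{x,X}|\subseteq|\mathcal O_{\P^{n-1}}(2)|$ spans a linear space of dimension exactly $\dim(|II_{x,X}|)$. Note also that the paper itself gives no proof of Proposition \ref{IR}: it quotes it from \cite{IR2}, Proposition 1.2, so the relevant comparison is with the Ionescu--Russo argument, which works with entry loci and fibers of the tangential projection rather than with conics.

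That difference is exactly where your proof breaks. The ``fact'' you rely on --- that on a secant defective manifold two general points are joined by a conic, i.e.\ that $Q^r_{x,y}$ has $r\ge 1$ --- is the definition of a conic-connected manifold, and it is false for secant defective manifolds in general. Remark \ref{rationality}(b) of the paper exhibits smooth secant defective $4$-folds $X\subset\P^9$ (general degree-$d$ hypersurface sections of the cone over the Segre $\P^2\times\P^2$) which are of general type for large $d$; such an $X$ is not uniruled, so there is no conic through a general point, yet Proposition \ref{IR} applies to it. The assumption is moreover circular with respect to the paper's logic: Proposition \ref{IR} is precisely the tool used in Theorem \ref{main_thm} to prove, only under the additional hypothesis $\epsilon\le n-2$ and via Scorza's lemma (Theorem \ref{scorza}), that $X$ is LQEL and hence conic-connected; conic-connectedness is the hard conclusion, not a free input. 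What secant defectiveness actually provides is that the entry locus $\Sigma_p(X)$, $p\in\langle x,y\rangle$ general, has pure dimension $\delta\ge 1$ and contains both $x$ and $y$; but it need not be a quadric or even rational, its component through $y$ need not contain $x$, and your plane-projection trick (the plane of a conic through $x$ meets $T_xX$ in a line, hence the conic is contracted by $\pi_x$) has no analogue for it. Showing that the $\delta$-dimensional fibers of $\pi_x$ nevertheless produce tangent directions at $x$ whose images fill up all of $W_x$ is the real content of the cited proof, and it is exactly the step your third paragraph defers to ``a universal family of entry loci whose $\tilde{\pi}_x$-image is all of $W_x$'' --- that is, to the statement being proved.
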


It is also remarkable that \textit{good} tangential behavior guarantees \textit{simple} entry loci of secant defective manifolds as the following classical result of Scorza assures (see, for example, \cite{Sco,Ru1}):

\begin{Thm}[Scorza Lemma]\label{scorza} Let $X\subset\P^N$ be an irreducible nondegenerate $n$-dimensional variety of secant defect $\delta(X)=\delta\ge1$ such that $SX\subsetneq \P^N$. Suppose that a general tangential projection $\pi_x(X)=W_x\subset\P^{N-n-1}$ is an irreducible variety having birational Gauss map. Let $y\in X$ be a general point. Then
\begin{itemize}
\item[(a)] the irreducible component of the closure of fiber of the rational map $\pi_x:X\dasharrow W_x\subset\P^{N-n-1}$ passing through $y$ is either an irreducible quadric hypersurface of dimension $\delta$ or a linear space of dimension $\delta$, the last case occuring only for singular varieties.
\item[(b)] There exists on $X\subset\P^N$ a $2(n-\delta)$-dimensional family $\mathcal Q$ of quadric hypersurfaces of dimension $\delta$ such that through two general points $x,y\in X$ there passes a unique quadric $Q_{x,y}$ of the familiy $\mathcal Q$. Furthermore, the quadric $Q_{x,y}$ is smooth at $x,y$ and it consists of the irreducible components of $\Sigma_p(X)$ passing through $x$ and $y$, $p\in<x,y>$ general.
\item[(c)] If $X$ is smooth, then a general member of $\mathcal Q$ is smooth.
\end{itemize}
\end{Thm}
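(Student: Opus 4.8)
The plan is to reduce everything to understanding a single general fiber of the tangential projection $\pi_x : X \dashrightarrow W_x$, and to show that this fiber is a quadric hypersurface of dimension $\delta$. Fix a general $x \in X$, let $w = \pi_x(y)$ for a general $y \in X$, and write $Q = \overline{\pi_x^{-1}(w)}$ for the irreducible component of the closure of the fiber through $y$. Since $\pi_x$ is the projection from $T_xX$, the fiber over $w$ lies in the linear space $\langle T_xX, y\rangle$ of projective dimension $n+1$, so $Q \subseteq X \cap \langle T_xX, y\rangle$, and $\dim Q = \delta$ because the general fiber of $\pi_x$ is purely $\delta$-dimensional. Passing to $Bl_xX$, the restriction of $\tilde\pi_x$ to $E$ is the second fundamental form, which surjects onto $W_x$ by Proposition \ref{IR}; hence the proper transform of the general fiber meets $E$, showing that $x \in \overline{Q}$. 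Thus $Q$ is a $\delta$-dimensional subvariety through both $x$ and $y$, and proving (a) amounts to identifying $Q$ as a quadric (or, degenerately, a linear space).

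The first step I would carry out is a Terracini computation showing that the embedded tangent spaces are constant along $Q$. For $z \in Q$ general and $p \in \langle x, z\rangle$ general, part (2) of Theorem \ref{terracini} gives $\langle T_xX, T_zX\rangle = T_pSX$, a space of dimension $2n+1-\delta$. Because $\pi_x(z) = w$ is constant on $Q$ and $d\pi_x$ maps $T_zX$ onto $T_wW_x$, this span equals the fixed linear space $\pi_x^{-1}(T_wW_x) =: \Pi$, independent of $z \in Q$; in particular $\langle Q\rangle \subseteq \Pi$ and $\dim(T_xX \cap T_zX) = \delta - 1$ for all such $z$. With the tangent directions along $Q$ varying in this rigid fashion, the remaining point is to show that $Q$ spans only a $\P^{\delta+1}$ and is a degree-two hypersurface in it. This degree count is the heart of the matter and the main obstacle, and it is exactly here that the hypothesis that $W_x$ has a birational Gauss map must enter, via the classical Scorza argument: the constancy of $T_wW_x$ along each fiber, combined with the generic injectivity of $w \mapsto T_wW_x$ on $W_x$, forces the second-order behaviour of $X$ transverse to $Q$, encoded by $|II_{x,X}|$ through Proposition \ref{IR}, to be governed by a single quadratic equation, so that $Q$ cannot have degree $\geq 3$; the degree-one (linear) alternative survives this analysis only when $X$ is singular, yielding the dichotomy in (a).

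For (b), I would run the construction in families. Letting $x, y$ vary produces a family $\mathcal Q$ of $\delta$-dimensional quadrics whose general member is $Q_{x,y}$, the component through $x$ and $y$ of $\Sigma_p(X)$ for $p \in \langle x, y\rangle$ general: the fiber $Q$ of $\pi_x$ through $y$ is swept out by points spanning secant lines through such $p$, which is the defining property of the entry locus. To compute $\dim \mathcal Q$ I would use the incidence variety $I = \{(z, Q') : z \in Q' \in \mathcal Q\}$; since a unique quadric of $\mathcal Q$ passes through two general points, the quadrics through a fixed general point sweep $X$ with $\delta$-dimensional fibers and hence form an $(n-\delta)$-dimensional subfamily, giving $\dim I = n + (n-\delta)$ and therefore $\dim \mathcal Q = \dim I - \delta = 2(n-\delta)$. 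Uniqueness through two general points records that a general $y$ lies on a single component of a single fiber, while smoothness of $Q_{x,y}$ at $x$ and $y$ holds because $x, y$ are general points of $Q_{x,y}$ and a quadric is smooth at its general points.

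Finally, (c) follows from generic smoothness. When $X$ is smooth, so is $Bl_xX$, and $\tilde\pi_x : Bl_xX \dashrightarrow W_x$ is a dominant rational map of smooth varieties over the complex numbers, so its general fiber is smooth. Since this fiber is the proper transform of the general member $Q$ of $\mathcal Q$, and $Q$ is a quadric by (a), it is a smooth quadric. The same observation excludes the linear-space alternative of (a) for smooth $X$, consistently with the parenthetical remark there.
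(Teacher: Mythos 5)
The paper does not actually prove this statement: Theorem \ref{scorza} is quoted as a classical result, with references to \cite{Sco} and \cite{Ru1}, so there is no in-paper argument to compare yours against. Judged on its own terms, your proposal assembles the standard preliminary reductions correctly: the component $Q$ of the fiber of $\pi_x$ through a general $y$ lies in $\langle T_xX,y\rangle$ and has pure dimension $\delta$, and the Terracini computation showing that $\langle T_xX,T_zX\rangle=T_pSX$ is a fixed $(2n+1-\delta)$-dimensional linear space $\Pi$ for all general $z\in Q$ (hence $\dim(T_xX\cap T_zX)=\delta-1$) is right and is indeed the correct first step. But there is a genuine gap precisely where you flag ``the heart of the matter'': the claim that $Q$ spans only a $\P^{\delta+1}$ and has degree two is asserted (``forces the second-order behaviour\ldots to be governed by a single quadratic equation''), not proved, and no appeal to $|II_{x,X}|$ by itself substitutes for the argument. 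The classical mechanism is quite specific: birationality of $\mathcal G_{W_x}$ is what identifies $Q$ with the union of components through $y$ of the entry locus $\Sigma_p(X)$ for $p\in\langle x,y\rangle$ general; the constancy of tangent spaces along $Q$, combined with this entry-locus description, is then used to show first that $\langle Q\rangle$ is only $(\delta+1)$-dimensional, so that $Q$ is a hypersurface in its span, and second that the projection of $Q$ from the general point $p\in\langle x,y\rangle\subset\langle Q\rangle$ is generically two-to-one onto a $\P^{\delta}$ (pairs $z,z'$ with $p\in\langle z,z'\rangle$ are identified), which is what forces $\deg Q\le 2$ and gives the quadric/linear dichotomy of (a). None of this chain appears in your write-up.

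Two secondary problems. In (b), your dimension count for $\mathcal Q$ presupposes both the uniqueness of the member through two general points and, implicitly, the symmetry $Q_{x,y}=Q_{y,x}$ needed for $\mathcal Q$ to be a well-defined family independent of the choice of projection center; the latter requires an argument you do not give. In (c), generic smoothness applies to a morphism and only yields smoothness of the general fiber over the locus where $\tilde\pi_x$ is actually defined; the member of $\mathcal Q$ is the \emph{closure} of that fiber, which passes through $x$ and through points of indeterminacy and could a priori acquire singularities there --- note that (a) explicitly allows singular and even linear fibers in general, so some genuinely extra input is needed for smooth $X$. The classical proof of (c) instead shows that the vertex of a singular quadric in the family would, upon varying the member, contradict Zak's theorem on tangencies (or a generality/monodromy argument), rather than invoking generic smoothness of a rational map.
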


In particular, we deduce from part (b) of Theorem \ref{scorza}, that as soon as $W_x$ has birational Gauss map, then $X\subset \P^{N}$ is a LQELM of type $\delta=\delta(X)>0$ and in particular a \textit{CCM}.

\begin{Remk}\label{gen_finite}
Note that it is enough to assume that a general tangential projection $W_x$ has generically finite Gauss map in order to deduce that $X$ is a LQELM via Scorza lemma. In fact, in case of characteristic zero, generically finiteness of the Gauss map $\mathcal G_{W_x}$ is equivalent to that $\mathcal G_{W_x}$ is birational by linearity of general contact loci (see \cite{Z1} Theorem 2.3 (c) , pg. 21) and Zak's Theorem on Tangencies (see \cite{Z1} Corollary 1.8, pg. 18).
\end{Remk}

Recently, a classification of \textit{CCM} has been obtained by Ionescu and Russo in \cite{IR1}. Recall that $X$ is Fano if $-K_X$ is ample. We call $X$ a {\it prime} Fano of index $i(X)$ if $Pic(X)\simeq\Z\langle\mathcal O_X(1)\rangle$ and $-K_X=i(X)H$ for some positive integer $i(X)$, where $H$ is the hyperplane section class of $X\subset \P^{N}$. Since conic-connectedness is stable under isomorphic projection, we may assume that $X\subset \P^{N}$ is linearly normal.

\begin{Thm}[Ionescu, Russo]\label{CC} Let $X\subset\P^N$ be a linearly normal CCM of dimension $n$. Then either $X\subset \P^{N}$ is a prime Fano manifold of index $i(X)\ge \frac{n+1}{2}$, or it is projectively equivalent to one of the following:
\begin{itemize}
\item[(a)] the second Veronese embedding $v_2(\P^n)$ in $\P^{M(n)}$;
\item[(b)] the projection of $v_2(\P^n)$ from the linear space $\langle v_2(\P^s)\rangle$, where $\P^s\subset \P^n$ is a linear subspace such that $N=M(n)-{s+2 \choose 2}$ and $0\le s \le n-2$; equivalently $X\simeq Bl_{\P^s}(\P^n)$ embedded in $\P^N$ by the linear system of quadric hypersurfaces of $\P^n$ passing through $\P^s$.
\item[(c)] the Segre embedding $\P^a\times\P^b\subset\P^{ab+a+b}$, where  $a,b\ge1$ and $a+b=n$.
\item[(d)] A hyperplane section of the Segre embedding $\P^a\times\P^b\subset\P^{N+1}$, where $n\ge3$, $N=ab+a+b-1$, $a,b\ge2$ and $a+b=n+1$.
\end{itemize}
\end{Thm}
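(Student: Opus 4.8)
The statement being a classification, the plan is to first squeeze strong numerical constraints out of conic-connectedness, then split into cases by the Picard group and finish each with structure theory. Fix general points $x,y\in X$ and a smooth conic $C\cong\P^1$ contained in $X$ passing through them, and set $d:=-K_X\cdot C$, so that $H\cdot C=2$. Since these conics sweep out $X$ and join two general points, a general one is free, so $h^0(C,f^{\ast}T_X)=d+n$. Imposing passage through $x$ and $y$ cuts the dimension by $2n$, and quotienting by the one-parameter subgroup of $\mathrm{Aut}(\P^1)$ fixing the two preimages removes one more dimension, so the family of conics joining $x$ and $y$ has dimension $d-n-1\ge0$. This yields the key inequality $-K_X\cdot C\ge n+1$; together with the fact that $X$ is rationally connected by these conics, one concludes that $X$ is a Fano manifold.

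With this in hand I would split on $\mathrm{Pic}(X)$. If $\mathrm{Pic}(X)\cong\Z\langle\mathcal O_X(1)\rangle$, then $-K_X=i(X)H$ and $2\,i(X)=-K_X\cdot C\ge n+1$ gives $i(X)\ge\frac{n+1}{2}$, which is exactly the prime Fano alternative. The real content of the theorem is therefore the complementary case in which $H=\mathcal O_X(1)$ is not a generator of $\mathrm{Pic}(X)$, and here I must produce precisely the four families (a)--(d).

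For this complementary case the plan is to use the extremal contractions forced by the very high index. First, if $\rho(X)=1$ while $H$ is divisible, write $H=mH_0$ with $H_0$ the ample generator and $m\ge2$; then $-K_X\cdot C\ge n+1$ forces the Fano index of $X$ with respect to $H_0$ to be maximal, so by the Kobayashi--Ochiai theorem $X\cong\P^n$ with $H_0=\mathcal O_{\P^n}(1)$ and $m=2$, giving the Veronese $v_2(\P^n)$ of (a). If instead $\rho(X)\ge2$, I would analyze the two extremal rays: the bound on $-K_X\cdot C$, combined with the length inequalities for extremal rays in the spirit of Wi\'sniewski, forces each contraction to be either a projective-bundle projection or the blow-down of a single divisor. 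When $X$ carries two fibre-type contractions realized by the conic family, they exhibit $X$ as a Segre product $\P^a\times\P^b$, giving (c); a variant with a Lefschetz/adjunction argument on the remaining contraction produces its hyperplane section in (d). When one extremal ray is birational, the contraction is the blow-down $\Bl_{\P^s}\P^n\to\P^n$, and a direct analysis of the very ample linear system identifies $X$ with $\Bl_{\P^s}\P^n$ re-embedded by the quadrics of $\P^n$ through $\P^s$, i.e. the projections of the Veronese in (b).

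The main obstacle is exactly this last step: excluding every other Fano manifold of $\rho\ge2$ and index $\ge\frac{n+1}{2}$ and matching the survivors to the four explicit models. A clean alternative to the raw Mori-theoretic case analysis is an induction via tangential projection, in the spirit of the tools recalled above: one shows that projecting a CCM from a general tangent space again yields a CCM (or a linear space), reducing to low-dimensional base cases where the classification is classical. The delicate point in either route is to keep the conic family --- equivalently the entry loci and the Gauss map of the projected variety --- under control through the induction; this is precisely where Scorza's lemma, Zak's theorem on tangencies, and the linearity of general contact loci enter, guaranteeing that the relevant loci stay quadratic and that no pathological degenerations of the family of conics occur.
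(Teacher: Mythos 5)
First, a caveat on the comparison itself: the paper does not prove Theorem \ref{CC} at all --- it is quoted from Ionescu--Russo \cite{IR1} and used as a black box in Corollary \ref{class_almost} --- so your proposal can only be measured against the proof in that reference. Your opening does coincide with the easy half of that proof: a general conic $C$ joining two general points is free, the count $h^0(C,f^{\ast}T_X)=-K_X\cdot C+n$ minus the two point conditions and the stabilizer of the two marked points in $\mathrm{Aut}(\P^1)$ gives a family of conics through $x,y$ of dimension $-K_X\cdot C-n-1\ge 0$, hence $-K_X\cdot C\ge n+1$; when $\mathrm{Pic}(X)=\Z\langle H\rangle$ this gives $i(X)\ge\frac{n+1}{2}$, and when $\rho(X)=1$ with $H=2H_0$ the Kobayashi--Ochiai bound plus linear normality yields $v_2(\P^n)$. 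These steps are correct.

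The remainder has genuine gaps. (1) ``Rationally connected by conics'' does not imply Fano --- rational connectedness never implies Fano by itself --- so the assertion that $X$ is Fano, and a fortiori that $\rho(X)\le 2$ so that speaking of ``the two extremal rays'' is even legitimate, is unproved; establishing that a CCM is Fano of Picard number at most $2$ is a substantive step in \cite{IR1}, not a formality. (2) The dichotomy ``two fibre-type contractions $\Rightarrow$ $X\simeq\P^a\times\P^b$'' is false as stated: the hyperplane section of $\P^a\times\P^b$ in case (d) also carries two projective-bundle contractions (onto $\P^a$ and onto $\P^b$), so your ``variant with a Lefschetz/adjunction argument'' is precisely the missing argument rather than a proof of it. (3) Even after identifying the abstract variety one must recover the embedding: that in case (b) the hyperplane class on $Bl_{\P^s}(\P^n)$ is the full system of quadrics through $\P^s$, and that no other $\rho=2$ Fano manifold with a contraction of the required length survives the index bound; none of this is carried out. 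The closing alternative via tangential projections and Scorza's lemma is likewise only a plan. In short, you have reproduced the numerical entry point of Ionescu--Russo's argument, but the classification itself --- which is the content of the theorem --- is not established.
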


\begin{Remk}\label{cov_line}
We would like to mention that every prime Fano $X$ of index $i(X)\ge\frac{n+1}{2}$ in the classification of conic-connected manifolds of Theorem \ref{CC} is actually \textit{covered by lines}, i.e. through each point of $X$ there passes a line contained in $X$. When $i(X)>\frac{n+1}{2}$, this comes from S. Mori's work in \cite{Mo}. In case of $i(X)=\frac{n+1}{2}$, it is one of the consequences of \cite{CMSB} due to K. Cho, Y. Miyaoka and N.I. Shepherd-Barron.
\end{Remk}

\section{Secant defective manifolds near the extremal case}\label{section_2}

Here is our main results.

\begin{Thm}\label{main_thm}
Let $X\subset \P^N$ be a nondegenerate secant defective manifold of dimension $n\ge2$ with $SX\subsetneq \P^N$ and let $N= M(n)-\epsilon$ for some $\epsilon\ge0$. Then:
\begin{itemize}
\item[(i)] if $\epsilon\le n-2$, then $\delta(X)=1$ while for $n-1\le\epsilon$, we have 
$$1\le\delta(X)\le\min\{\epsilon-n+2,\frac{n}{2}\}~.$$
\item[(ii)] if $\epsilon\le n-2$, then $X\subset \P^{N}$ is a LQELM of type $\delta(X)=1$ (in particular, $X$ is a CCM).
\end{itemize}
\end{Thm}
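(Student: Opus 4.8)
The plan is to transport everything, via Proposition~\ref{IR}, into a statement about a linear system of quadrics on the exceptional divisor $E\cong\P^{n-1}$. Since $SX\subsetneq\P^N$, that proposition says that $|II_{x,X}|$ is a linear subsystem $V\subseteq|\mathcal O_{\P^{n-1}}(2)|$ whose associated rational map is the restriction $\tilde\pi_x|_E\colon\P^{n-1}\dashrightarrow W_x$ and is surjective onto $W_x$. Counting dimensions gives $\dim V={n+1\choose 2}-\epsilon$, so $V$ has codimension exactly $\epsilon$ in the complete quadric system; dually $W_x$ is the image of the Veronese $v_2(\P^{n-1})\subset\P^{M(n-1)}$ under linear projection from a center $\Lambda\cong\P^{\epsilon-1}$, and $W_x$ spans $\P^{N-n-1}=\P^{M(n-1)-\epsilon}$. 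In this dictionary $\dim W_x=n-\delta$, so the general fibre of $\tilde\pi_x|_E$ is $(\delta-1)$-dimensional, and $\delta=1$ is equivalent to $\tilde\pi_x|_E$ being generically finite, i.e. to $\langle\Lambda,v_2(p)\rangle\cap v_2(\P^{n-1})$ being finite for general $p$.

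For part~(i) in the range $\epsilon\ge n-1$ I would bound $\delta$ by differentiating $\tilde\pi_x|_E$ at a general point $p$. The rank of the differential is $\dim W_x=n-\delta$, and it is governed by the polar map $V\to(\mathbb C^n)^\ast$, $Q\mapsto A_Qp$; hence this map has rank $n-\delta+1$ and its kernel—the quadrics of $V$ singular at $p$—has dimension ${n\choose 2}+\delta-1-\epsilon$, exceeding by $\delta-1$ the value ${n\choose 2}-\epsilon$ predicted by the codimension of $V$. Feeding this pointwise excess into an incidence count over $\P^{n-1}$ (and keeping track of the vertices of the resulting family of singular quadrics) is designed to produce the inequality $\delta\le\epsilon-n+2$, while the bound $\delta\le n/2$ is Zak's inequality for smooth secant defective varieties. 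For $\epsilon\le n-2$ the first inequality excludes $\delta\ge2$, so the genuine content there is the generic finiteness of $\tilde\pi_x|_E$, which forces $\dim W_x=n-1$ and hence $\delta=1$; this is the technical heart, shared with part~(ii).

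For part~(ii), once $\delta=1$ the map $\tilde\pi_x|_E$ is generically finite onto $W_x\subset\P^{N-n-1}$, and by Remark~\ref{gen_finite} it suffices to check that $W_x$ has generically finite Gauss map: Theorem~\ref{scorza}(b) then yields that $X$ is a LQELM of type $\delta=1$, hence a CCM, the degenerate linear-space alternative of Theorem~\ref{scorza}(a) being excluded by smoothness of $X$. So everything reduces to the Gauss map of $W_x$. Here I would exploit that $W_x$ is the projection of $v_2(\P^{n-1})$—whose Gauss map is finite—from the small center $\Lambda$ of dimension $\epsilon-1\le n-3$, each embedded tangent space $\mathbb T_{v_2(p)}v_2(\P^{n-1})$ being a $\P^{n-1}$ of rank-$\le2$ symmetric tensors through $p\otimes p$.

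The hard part, and the precise place where $\epsilon\le n-2$ enters, is exactly this last reduction: proving that projecting $v_2(\P^{n-1})$ from a center of dimension $\le n-3$ is both generically finite and preserves generic finiteness of the Gauss map. By Zak's theorem on tangencies (as used in Remark~\ref{gen_finite}) the failure of the latter would make $W_x$ developable, i.e. its general contact locus a positive-dimensional linear space; pulling such a contact locus back through the quadric map $\tilde\pi_x|_E$ forces base conditions on $V$ that I expect to be incompatible with $\codim V=\epsilon\le n-2$. Quantifying this incompatibility—equivalently, showing that so small a center $\Lambda$ can neither lie on nor be swept coherently by the tangent spaces of the Veronese—is where the main work of the theorem lies.
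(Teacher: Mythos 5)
Your setup coincides with the paper's: via Proposition \ref{IR} you realize $W_x$ as the projection of $Y:=v_2(\P^{n-1})$ from a center $\Lambda\cong\P^{\epsilon-1}$, and you correctly reduce (ii) to the generic finiteness of the Gauss map of $W_x$ followed by Scorza's lemma. But at both places where an actual inequality must be produced, you stop at a plan: for $\epsilon\ge n-1$ you propose an ``incidence count'' over $\P^{n-1}$ built from the excess $\delta-1$ in the polar map, which you never carry out and which is ``designed to'' (not shown to) give $\delta\le\epsilon-n+2$; and for the Gauss map of $W_x$ you explicitly write that quantifying the incompatibility ``is where the main work of the theorem lies.'' As written, the proposal does not prove either the bound in (i) for $\epsilon\ge n-1$ or the key finiteness statement needed for (ii), so there is a genuine gap.

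The missing idea is a single, elementary, \emph{uniform} property of the Veronese: for \emph{any} two distinct points $y_1,y_2\in Y$ one has $\dim\langle T_{y_1}Y,T_{y_2}Y\rangle=2n-2$ (any two tangent spaces of $v_2(\P^{n-1})$ meet in exactly one point). Now suppose $y_1\neq y_2$ are such that $f(y_1),f(y_2)$ are smooth points of $W_x$ with $T_{f(y_1)}W_x=T_{f(y_2)}W_x$; since projection from $\Lambda$ carries $T_{y_i}Y$ into $T_{f(y_i)}W_x$ at general points, both $T_{y_i}Y$ lie in $\langle\Lambda,T_{f(y_1)}W_x\rangle$, whence
\begin{equation*}
2n-2\;\le\;\dim\langle\Lambda,T_{f(y_1)}W_x\rangle\;=\;(n-\delta)+\epsilon\;\le\;n-1+\epsilon,
\end{equation*}
i.e.\ $\epsilon\ge n-1$ and $\delta\le\epsilon-n+2$. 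This one inequality does all the work. If $f$ is not birational, a general fiber supplies such a pair, giving the bounds of (i) (together with $\delta\le n/2$ from Zak's linear normality theorem); if $\epsilon\le n-2$ it forces $f$ to be birational, so $\dim W_x=n-1$ and $\delta=1$. For (ii), if the Gauss map of $W_x$ were not generically finite, Remark \ref{gen_finite} produces a line $L\subset W_x$ through a general point along which the tangent space is constant; choosing $y_1,y_2$ over two distinct general points of $L$ gives the same pair and again forces $\epsilon\ge n-1$, a contradiction. Your polar-map computation is consistent with this but points in a harder direction; replacing it by the span computation above closes both gaps and recovers the paper's proof.
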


\begin{proof}
Let $\delta=\delta(X)>0$. Zak's Linear Normality Theorem (see, for example, \cite{Z1} Corollary 2.17, pg. 48) assures that under our hypothesis, $\delta\le\frac{n}{2}$. Take a general point $x\in X$ and consider the tangential projection $\pi_x:X\subset \P^N \dasharrow W_x\subset\P^{N-n-1}$. From Proposition \ref{IR}, we have a diagram

\begin{equation}\label{tg_proj}
    \xymatrix @R=3.5pc @C=3.5pc{
 &Bl_x(X)\ar@{-->}[dr]^{\widetilde{\pi_x}}\ar[d]_{\sigma}& E\simeq\P^{n-1}\ar@{-->}[d]^{|II_{x,X}|}\ar@{_{(}->}[l] \ar@{^{(}->}[dr]^{|\mathcal O_{\P^{n-1}}(2)|} &\\
  &**[l]\P^N\supset X\ar@{-->}[r]_{\pi_x } &**[r] W_x^{n-\delta} \subset \P^{N-n-1} &**[r] v_2(\P^{n-1})\subset\P^{M(n-1)}~. \ar@{-->}[l]^-{f}
  }
\end{equation}

Then
\begin{eqnarray*}
\dim|\mathcal O_{\P^{n-1}}(2)| - \dim |II_{x,X}| &=& M(n-1)-\{M(n)-\epsilon-n-1\}\\
&=&\frac{(n-1)(n+2)}{2}-\frac{n(n+3)}{2}+\epsilon+n+1 = \epsilon~,
\end{eqnarray*}
so that $f$ (see the diagram \ref{tg_proj}) should be a projection of the second Veronese $Y:=v_2(\P^{n-1})$ from a linear subspace $\Lambda\simeq\P^{\epsilon-1}$. Moreover $\delta(Y)=1$ and for any two distinct points $y_1,y_2\in Y$ we have $\dim\langle T_{y_1}Y,T_{y_2}Y\rangle = 2n-2$. Let $y_1,y_2\in Y$ be two distinct points such that $f(y_1)$ and $f(y_2)$ are smooth points of $W_x$ and such that
$$ T_{f(y_1)}W_x = T_{f(y_2)}W_x~\textrm{and}~\langle \Lambda, T_{y_i}Y\rangle = \langle \Lambda, T_{f(y_i)} W_x\rangle~\textrm{for each $i=1,2$}.$$

Then $\langle T_{y_1}Y,T_{y_2}Y\rangle\subseteq \langle \Lambda, T_{f(y_i)}W_x \rangle$, yielding
\begin{displaymath}
2n-2\le\dim\langle \Lambda,T_{f(y_i)}W_x\rangle = (n-\delta)+\epsilon \le n-1+\epsilon
\end{displaymath}
which implies $\delta\le\epsilon-n+2$ and $\epsilon \ge n-1$.\\

(i) If $\epsilon\le n-2$, then by the above argument $f$ is birational and $\delta(X)=1$. Suppose now $\epsilon\ge n-1$. If $f$ is birational, then $1=\delta(X)\le\min\{\epsilon-n+2,\frac{n}{2}\}$; if $f$ is not birational, then by the above argument $\delta(X)\le\epsilon-n+2$ and part (i) is completely proved.\\

(ii) Let $y^\prime$ be a general point of $W_x\subset\P^{N-n-1}$ and suppose that $\mathcal G_{W_x}$ is not birational. Then, by linearity of general contact loci and Zak's Theorem on Tangencies (see Remark \ref{gen_finite}), we know that there exists an (affine) line $L\subset W_x$ passing through $y^\prime$ along which the tangent spaces to $W_x$  are constant.

By applying the above argument to points $y_1, y_2\in Y$ lying in two distinct fibers over general points of $L$ we would deduce $\epsilon\ge n-1$, contrary to the hypothesis in (ii). Therefore, part (ii) follows by applying Scorza lemma (see Theorem \ref{scorza} and Remark \ref{gen_finite}). 

\end{proof}

\begin{Remk}\label{rationality}
There are some remarks on this Theorem \ref{main_thm}.

\begin{itemize}
\item[(a)] In \cite{Z1} Zak proved that for a secant defective manifold $X\subset\P^N$ of dimension $n$ with $\delta=\delta(X)$, necessarily $N\le g([\frac{n}{\delta}],n,\delta)$, see \cite{Z1} for the definition of the function $g$. Then one easily see that $g([\frac{n}{2}],n,2) < M(n) -n+2$ so that the first part of (i) in the above Theorem is also a consequence of more general results, which could also provide a proof of (ii) above. The above proof is elementary and simple and uses slightly different techniques.
\item[(b)] It is obvious that one can not expect that any secant defective manifold $X\subset\P^{M(n)-\epsilon}$ is a CCM for \textit{all} $\epsilon\ge0$. Let us consider an example of some infinite family of $4$-dimensional secant defective manifolds, which is well-known (see also \cite{FR}). Let $V=\P^2\times\P^2$ be a Segre 4-fold in $\P^8$ which is a secant defective manifold with $\delta(V)=2$. Choose a point $p\in\P^9 \setminus \P^8$ and consider the cone $Z=C_p(V)\subset\P^9$. Since $SZ=C_p(SV)$, the secant variety $SZ$ is 8-dimensional. Take the intersection of $Z$ and $H$, a \textit{general} hypersurface of degree $d\ge2$ not passing through $p$. Denote this 4-dimensional intersection by $X$. Then, $X\subset\P^9$ becomes smooth, $SX=SZ=C_p(SV)$, $\dim(SX)=8$ and $K_X \thicksim (d-4)H$. Therefore, for a large $d>0$, $X$ is to be \textit{of general type}.
\end{itemize}
\end{Remk}

Finally, we are going to present a classification of $n$-dimensional secant defective manifolds $X\subset\P^N$ \textit{near the extremal case}, that is to say the cases of $N=M(n)-\epsilon$ for $\epsilon\le n-2$, which extends the previous results of \cite{Z1,MSS}.

\begin{Coro}\label{class_almost}
Let $X\subset \P^N$ be a nondegenerate secant defective manifold of dimension $n\ge 2$ with $SX\subsetneq \P^N$ and let $N\ge M(n)-(n-2)$. Then $X$ is projectively equivalent to one of the following:
\begin{itemize}
\item[(i)] the second Veronese embedding $v_2(\P^n)$ in $\P^{M(n)}$;
\item[(ii)] the isomorphic projection of $v_2(\P^n)$ into $\P^{M(n)-\epsilon}$ for any $1\le\epsilon\le n-2$;
\item[(iii)] the projection of $v_2(\P^n)$ from the linear space $\langle v_2(\P^s)\rangle$ (we call this $B^n_s$), where $\P^s\subset\P^n$ is any linear subspace such that $s\ge 0, {s+2\choose 2}\le n-2$;
\item[(iv)] the isomorphic projection of $B^n_s\subset\P^{M(n)-{s+2 \choose 2}}$ into $\P^{M(n)-\epsilon}$ for any $\epsilon$ with ${s+2 \choose 2}<\epsilon\le n-2$.
\end{itemize}
\end{Coro}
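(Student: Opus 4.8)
The plan is to combine Theorem \ref{main_thm} with the Ionescu--Russo classification of conic-connected manifolds (Theorem \ref{CC}). First I would observe that the hypothesis $N\ge M(n)-(n-2)$ means exactly $\epsilon\le n-2$, so part (ii) of Theorem \ref{main_thm} applies and tells us that $X\subset\P^N$ is an LQELM of type $\delta(X)=1$, hence in particular a CCM. Since conic-connectedness is preserved under isomorphic projection, I may replace $X$ by its linearly normal model $\tilde X$ and run Theorem \ref{CC} on it; the manifold $X$ itself is then recovered as an isomorphic projection of $\tilde X$. The bookkeeping throughout is governed by the single numerical constraint $N=M(n)-\epsilon$ with $0\le\epsilon\le n-2$, which I would keep in front at every step.

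Next I would go through the four cases (a)--(d) of Theorem \ref{CC} and eliminate those incompatible with being secant defective and with the embedding-dimension constraint. Cases (c) and (d), the Segre variety $\P^a\times\P^b$ and its hyperplane section, are secant defective only with $\delta\ge2$ in the relevant ranges, which contradicts $\delta(X)=1$; more simply, one checks their linear-normal embedding dimensions are far below $M(n)-(n-2)$ for $n\ge2$, so they are excluded. For the prime-Fano case of high index $i(X)\ge\frac{n+1}{2}$, I would use Remark \ref{cov_line}: such manifolds are covered by lines, whereas a nondegenerate manifold with $\delta=1$ and $N$ this large cannot be covered by lines in a way compatible with the quadratic entry locus condition; the point is that covering lines would force the entry loci to degenerate, contradicting that $Q^1_{x,y}$ is a genuine (one-dimensional) quadric. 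So the only surviving cases of Theorem \ref{CC} are (a), the second Veronese $v_2(\P^n)$, and (b), the projections $\Bl_{\P^s}(\P^n)=B^n_s$ of $v_2(\P^n)$ from $\langle v_2(\P^s)\rangle$.

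With the linearly normal model pinned down to either $v_2(\P^n)$ or $B^n_s$, I would then translate the constraint $N=M(n)-\epsilon$, $0\le\epsilon\le n-2$, into the list (i)--(iv). For the Veronese model $\tilde X=v_2(\P^n)$ sitting in $\P^{M(n)}$: if $\epsilon=0$ we get case (i) verbatim, and if $1\le\epsilon\le n-2$ then $X$ is an isomorphic (inner) projection of $v_2(\P^n)$ into $\P^{M(n)-\epsilon}$, which is case (ii). For the model $\tilde X=B^n_s$, which by Theorem \ref{CC}(b) lives linearly normally in $\P^{M(n)-\binom{s+2}{2}}$: the requirement $M(n)-\binom{s+2}{2}\ge M(n)-(n-2)$ forces $\binom{s+2}{2}\le n-2$, giving the constraint in (iii); the boundary $\epsilon=\binom{s+2}{2}$ yields $B^n_s$ itself (case (iii)), and the remaining $\binom{s+2}{2}<\epsilon\le n-2$ give isomorphic projections of $B^n_s$, namely case (iv). I would finish by noting all these are genuinely secant defective with $SX\subsetneq\P^N$, so no further candidates appear.

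The main obstacle I anticipate is the elimination of the prime-Fano, Segre, and Segre-hyperplane-section cases cleanly, since Theorem \ref{CC} is stated for general CCM and does not itself record $\delta$ or $N$; the delicate step is showing that the large embedding dimension $N\ge M(n)-(n-2)$ together with $\delta(X)=1$ is incompatible with those families, for which I would need to either compute or invoke the known secant defects and linear-normal embedding dimensions of $\P^a\times\P^b$ and its hyperplane sections and confront them with the covered-by-lines obstruction of Remark \ref{cov_line}. A secondary subtlety is ensuring, in cases (ii) and (iv), that an isomorphic projection into $\P^{M(n)-\epsilon}$ genuinely exists and stays smooth and secant defective; this is a standard general-center argument, but it must be stated so that the resulting $N$ realizes every value in the claimed range.
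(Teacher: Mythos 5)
Your overall architecture matches the paper's: invoke Theorem \ref{main_thm}(ii) to get that $X$ is an LQELM of type $\delta(X)=1$, hence a CCM, pass to the linearly normal model, run Theorem \ref{CC}, and eliminate all cases except (a) and (b); the final bookkeeping translating $N=M(n)-\epsilon$ into the list (i)--(iv) is also as in the paper. The eliminations of the Segre and its hyperplane section are essentially fine: the paper kills (c) by $\delta(\P^a\times\P^b)=2$ and kills (d) by the embedding-dimension count $M(n)-(n-2)>(a+1)(b+1)-2$, which is the ``more simply'' alternative you mention (your primary claim that case (d) has $\delta\ge2$ is not what the paper uses and is not obviously true, so you should rely on the dimension count there).

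The genuine gap is your exclusion of the prime Fano case. Your argument --- that being covered by lines ``would force the entry loci to degenerate, contradicting that $Q^1_{x,y}$ is a genuine quadric'' --- is not correct: being covered by lines is perfectly compatible with being an LQELM of type $\delta=1$ (indeed, Remark \ref{cov_line} records that all the prime Fanos in the CCM classification are covered by lines, and they are all conic-connected). So Remark \ref{cov_line} alone gives no contradiction, and this is precisely the step where the hypothesis $\epsilon\le n-2$ must enter quantitatively. The paper's actual argument is: for an LQELM of type $\delta=1$, Theorem 2.3 of \cite{Ru} gives $\dim\mathcal{L}_x=\frac{n-3}{2}$; by (\ref{tg_pr}) one has $\mathcal{L}_x\subseteq Bs(|II_{x,X}|)=Bs(f)$, and since $f$ is the projection of $v_2(\P^{n-1})$ from a $\P^{\epsilon-1}$, the span $\langle v_2(\mathcal{L}_x)\rangle$ lies in that $\P^{\epsilon-1}$; on the other hand $\dim\langle v_2(\mathcal{L}_x)\rangle\ge\binom{\frac{n-3}{2}+2}{2}-1$, with equality only if $\mathcal{L}_x$ is a linear subspace of $E$, which the prime Fano hypothesis excludes by Theorem 1.1 of \cite{Ar}. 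Combining gives $\binom{\frac{n-3}{2}+2}{2}-1<\epsilon-1\le n-3$, a numerical contradiction. Without some version of this count (or another argument genuinely using $N\ge M(n)-(n-2)$), the prime Fano branch of Theorem \ref{CC} is not ruled out and the corollary is not proved.
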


\begin{proof}
By Theorem \ref{main_thm}, any secant defective manifold $X\subset\P^N$ is a LQELM for $N=M(n)-\epsilon$, where  $0\le\epsilon\le n-2$ and $n\ge 2$. In particular $X$ is one of the list in the \textit{CCM} classification of Theorem \ref{CC}. It is easy to see that the second Veronese $v_2(\P^2)\subset\P^5$ is the only case for $n=2$. From now on, let us assume that $n\ge3$.\\

First, we claim that $X\subset\P^N$ is not prime Fano in our range. Recall that $\mathcal{L}_x\subset E$ is the Hilbert scheme of lines contained in $X$ and passing through the general point $x\in X$, which under this hypothesis is equidimensional. From (\ref{tg_pr}) we have $\mathcal L_{x}\subseteq Bs(|II_{x,X}|)=Bs(f)$ (the base locus). Since $\delta(X)=1$, Theorem 2.3 in \cite{Ru} yields $\dim\mathcal{L}_x =\frac{n-3}{2}\ge 0$. Note that the dimension of the linear span of $v_2(\mathcal{L}_x)$ is greater than or equal to ${\frac{n-3}{2}+2 \choose 2}-1$ with equality holding if and only if $\mathcal{L}_x\subset E$ is a linear subspace, a case which under the prime Fano hypothesis is excluded by Theorem 1.1 in \cite{Ar}. Furthermore, we have $\langle v_2(\mathcal{L}_x)\rangle\subseteq\langle Bs(f)\rangle\subseteq\P^{\epsilon-1}$ so that 
\begin{equation}\label{a1}
{\frac{n-3}{2}+2 \choose 2}-1 < \dim\langle v_2(\mathcal{L}_x)\rangle\le\epsilon-1\le n-3~,
\end{equation}
which is impossible.

Since $\delta(\P^a\times\P^b)=2$, we have only to exclude case (d) in Theorem \ref{CC}. In this case

\begin{displaymath}
M(n)-\epsilon\ge\frac{n(n+3)}{2}-(n-2) > \frac{(n+3)}{2}\frac{(n+3)}{2}-2\ge (a+1)(b+1)-2
\end{displaymath}
where $a\ge2, b\ge2, a+b=n+1$, and $n\ge3$.\\

This completes the proof.
\end{proof}

With the help of Corollary \ref{class_almost}, for instance, we could classify all secant defective 5-folds whose embedding dimension is near the upper bound $M(5)=20$ as follows:

\begin{Ex}[Secant defective 5-folds near the extremal case]\label{new_defect}
Let $X$ be a secant defective 5-fold in $\P^N$ with $N\ge M(5) - 3$ (i.e. $N=17, 18, 19, 20$). Then, $X$ is one of the following:
\begin{itemize}
\item[(i)] the second Veronese embedding $v_2(\P^5)$ in $\P^{20}$;
\item[(ii)] either the isomorphic projection of $v_2(\P^5)$ into $\P^{19}$ or $B^5_0$ in $\P^{19}$;
\item[(iii)] either the isomorphic projection of $v_2(\P^5)$ into $\P^{18}$ or the isomorphic projection of $B^5_0$ into $\P^{18}$;
\item[(iv)] either the isomorphic projection of $v_2(\P^5)$ into $\P^{17}$, the isomorphic projection of $B^5_0$ into $\P^{17}$ or $B^5_1$ in $\P^{17}$.
\end{itemize}

\end{Ex}

{\bf Acknowledgements}  The author would like to thank Professor Paltin Ionescu for having introduced him to this problem during PRAGMATIC 2010, for encouragement and for valuable suggestions. He is also grateful to the organizers of PRAGMATIC 2010 for providing a good chance to participate and to Dr. Jos\'e C. Sierra for his useful comments on this work. Finally, he thanks the referee for many corrections and helpful suggestions making this paper more readable.

\end{document}